\documentclass[preprint,12pt]{elsarticle}
\usepackage{graphicx} 
\usepackage{amsmath}
\usepackage{amsthm}
\usepackage{amssymb}
\usepackage{fullpage}
\usepackage{xcolor}
\usepackage{array}
\usepackage{placeins}
\usepackage{hyperref}
\usepackage[capitalise]{cleveref}

\newcolumntype{?}{!{\vrule width 1pt}}

\newtheorem{thm}{Theorem}
\newtheorem{corr}{Corollary}
\newtheorem{lemma}{Lemma}

\title{Cross-Correlation Periodograms with Decaying Noise Floor for Power Spectral Density Estimation}
\author[1,*]{Mark Magsino}
\affiliation[1]{
organization = {United States Naval Academy}, 
addressline = {121 Blake Road}, 
city = {Annapolis}, 
state = {Maryland}, 
postcode = {21402}, 
country = {United States of America}}
\affiliation[*]{Corresponding Author: magsino@usna.edu}
\date{}

\begin{document}
\begin{abstract}
We present a statistical analysis of a variant of the periodogram method that forms power 
spectral density estimates by cross-correlating the discrete Fourier transforms of 
adjacent time windows. The proposed estimator is closely related to cross-power spectral 
methods and to a technique introduced by Nelson, which has been observed empirically to 
improve detection of sinusoidal components in noise.
We show that, under a white Gaussian noise model, the expected contribution of noise to 
the proposed estimator is zero and that the estimator is unbiased
under certain window alignment conditions.
This contrasts with classical estimators where averaging reduces variance but
not expected noise.
Moreover, we 
derive closed-form expressions for the variance and prove an upper bound on
the expected magnitude of 
the estimator that decreases as the number of windows increases. 
This establishes that the 
proposed method achieves a noise floor that decays with averaging, unlike standard 
nonparametric spectral estimators.
We further analyze the effect of taking the absolute value to enforce nonnegativity, 
providing bounds on the resulting bias, and show that this bias also decreases with the 
number of windows. Theoretical results are validated through numerical simulations.
We demonstrate the potential sensitivity to phase misalignment and 
methods of realignment. We also provide empirical evidence that the estimator is 
robust to other types of noise.
\end{abstract}

\maketitle

\section{Introduction}\label{intro}
In \cite{Nelson1993} Nelson proposed a variant of 
Bartlett's method \cite{Bartlett1950} 
that involved cross-correlating adjacent time windows to modify
the periodogram method
for enhanced channelized instantaneous 
frequency estimation. He gave empirical evidence that it is
an improvement over Bartlett's method in this specific context and
that it is a good estimator of that
frequency even in low SNR settings \cite{Umesh1996}. Nelson's method
has been utilized in this way for several
applications such as speech processing 
\cite{Alsteris2007,Lim2012,Nelson2001,Shafi2009} and acoustics
\cite{Auger2013,Fulop2006,Thakur2013}. Cross-correlation makes the estimator
complex and Nelson's original formulation computed the complex modulus 
after averaging cross-correlated time windows. In addition to the statistical
analysis of these types of estimators, we show that only using the real part
leads to greater noise suppression while largely preserving the spectrum of the
signal.

Since it is a variant of Bartlett's method of power spectral averaging, the proposed
method of Nelson falls under the broad category of nonparametric estimators
\cite{Stoica2005}.
As such,
one would expect that it would also be suitable
for estimating the power spectra of more general signals and not only for stationary tones with noise. Estimating power spectra of more
general signals is important in numerous applications \cite{Babadi2014}.
A non-exhaustive list of applications in this setting
includes EEG \cite{Petroff2016}, optics \cite{Elson1995}, and radar
\cite{Yoshikawa2021}.
There are numerous methods for power spectrum estimation, 
each of which carry distinct advantages and disadvantages over 
other methods.
Some methods for estimating power spectra include the aforementioned
Bartlett's method \cite{Bartlett1950}, Welch's method \cite{Welch1967}, 
Capon's method \cite{Capon2005}, 
autoregressive methods \cite{Box1976, Takalo2005}, 
or MUSIC \cite{Schmidt1986}.
Some modern approaches even include machine learning methods
\cite{Jennings2019, Romero2017, Thilina2013}.

The primary benefits of Bartlett's method for power spectrum estimation 
are its simplicity, its nonparametric nature, and the fact that 
averaging out noise is a natural technique in many applications. This is
because the Central Limit Theorem guarantees that the contribution of white Gaussian noise converges
to its expected power spectra. Moreover, as more windows are taken the variance of 
its contribution to the power spectrum decreases. However, 
this very same theorem also provides the main drawback.
The power spectral density is computed using the squared modulus of
discrete Fourier transform values. In particular, the expected value of the 
squared modulus of Gaussian noise DFT values is actually
the variance of the noise! This means that on average a fixed strictly positive
value is added to the power spectrum
of the intended signal regardless of the number of windows used.

The main idea for using cross-correlation of distinct
time windows is to force independence of the noise components. 
There are a few existing applications which use a similar cross-correlation idea 
that make this modification
of Bartlett's method compelling. For example, a common practice in seismology 
involves cross-correlating
long windows of pure noise recorded at two different stations to leverage
the fact that ambient noise at each station is independent \cite{Campillo2003}.

This technique is able
to reduce the noise floor after averaging, even if one takes the absolute value
to avoid the negative values that sometimes appear in cross-power spectral methods.
Moreover, this noise floor decreases as more
windows are used. This is very surprising as it is in sharp contrast to 
most nonparametric averaging methods. Methods such as Welch's method or
multitapering methods \cite{Thomson1982} can reduce the noise floor below
the actual variance of the noise, but these methods do not decrease the
noise floor any further as more windows are used. Nonetheless,
\cref{main thm} shows that cross-correlation indeed provides a reduction of the
noise floor and is able to reduce it further with more windows.

The aforementioned multitapering method is another method that leverages independent
noise realizations for spectrum estimation,
but there are a few key differences with the proposed idea.
Specifically, multitaper methods create uncorrelated spectral estimates by using a
set of orthogonal window functions on the data. This technique offers great
variance reduction but still does not completely eliminate the noise floor problem
that it shares with other nonparametric methods. Furthermore, multitaper methods are
generally averaged across orthogonal tapers, as opposed to distinct time windows
like Bartlett's method or Welch's method. 

Our main contribution is
the statistical analysis of the proposed Cross-Correlation Periodogram (CCP)
estimator,
which is similar to the cross power spectrum proposed by Douglas Nelson. 
While cross-spectral techniques are well known in signal processing for suppressing 
uncorrelated noise, the statistical behavior of cross-correlation periodograms formed 
between adjacent time windows has not been analyzed in detail. In particular, it is not 
immediately clear how averaging such cross-spectral estimates of the
same signal but in different time windows affects the expected 
contribution of noise and the variance of the resulting estimator.

The main result, \cref{main thm}, computes the exact first and second moments
of the estimator under a white Gaussian noise model. The efficacy of the
estimator is further highlighted by \cref{bias thm}, which shows that the CCP 
estimator is unbiased
for periodic signals windowed in alignment with its period. 
Moreover, we prove upper bounds on the 
expected value of the CCP estimator if the absolute value
is computed after the fact to eliminate negative power values.
In the case of white Gaussian noise 
the bound is significantly smaller than the variance of the noise when at least
three windows are used and further decreases with more windows.
\cref{with data exp} provides an upper bound on the bias when estimating
periodic signals with the absolute value of the CCP estimator.
These results provide a theoretical explanation for empirical observations reported in
the prior work of Douglas Nelson. We confirm these theoretical results through
numerical simulation. We also demonstrate examples that show the window
alignment assumption can be relaxed in applications and is primarily
an analysis tool more than an inhibitive restriction.

The rest of this work is organized as follows. In \cref{setup} we establish
the notation and setting for our analysis as well as the proposed
cross-correlation periodogram method under analysis. 
We also stress the necessity of the window alignment assumption for our 
statistical analysis but also highlight why it may not be prohibitive as it seems.
In \cref{exp value section}, we compute the first and second moments of the
cross-correlation periodogram estimator and prove a
bound on the absolute value of the estimator in the pure white Gaussian noise
setting. This is then extended to the window aligned 
periodic signals outlined in \cref{setup}. 
Specifically, the estimator is unbiased when applied
to a window aligned 
periodic signal. If the absolute value is computed to avoid negativity, 
we prove there is an error term independent of the number of 
windows used that is proportional to the magnitude of the DFT of the actual signal.
However, the incurred bias still decreases as the number of windows is increased.
In 
\cref{sim section},
we provide details on numerical simulations for empirical verification of the bounds
obtained. We highlight the effects that ignoring the window alignment
assumption may have, but demonstrate examples that show this condition can often
be relaxed and any signal annihilation can be fixed in practice. This emphasizes
the window alignment assumption as primarily an analysis tool and is not a prohibitive
restriction. Furthermore, we provide empirical evidence that the key properties
of the estimator persist on other kinds of non-Gaussian noise with the decaying
noise floor observed across Laplace, uniform, and autoregressive noise 
distributions.
In \cref{discussion}, we summarize the results and
discuss potential further directions of research for the proposed
method.

\section{Setup and Preliminaries}\label{setup}
\subsection{Model and Notation}\label{model}
We model the true signal as a set of discrete
measurements that repeat outside the measuring window.
A continuous signal would require integral interchanges and stochastic
process arguments for a rigorous treatment. The core idea is that
cross-correlating windows enforces independence across noise windows, reducing their
contribution.
Details about convergence and integral interchange that arises in continuous theory 
would only distract from the main arguments.
Although this leaves questions about convergence to continuous
signals, any implementation requires discretization
and sampling in practice. Modeling the true signal
as discrete finite samples is a more accurate reflection 
of how the estimator approximates the
discretely sampled noiseless version of the signal that one
would ideally obtain from measurements.

Suppose we have samples of a periodic signal with additive Gaussian white noise 
over $M$ periods and $L$ measurements per period. Our measurements can be
modeled with
\begin{equation}
    y(t) = x(t) + n(t), \hspace{1em} t = 0, \cdots, LM-1,
\end{equation}
where $x$ is the $L$-periodic signal, i.e., $x(L+t) = x(t)$ for all $t$, and $n(t)$
is a sequence of i.i.d.\@ Gaussian random variables with zero mean and variance $\sigma^2$.
The Gaussian noise assumption is standard in spectral estimation analysis and allows
us to appeal to Isserlis' Theorem in \cref{with isserlis}. This allows us to obtain
exact closed form expressions for the first and second moments 
rather than asymptotic approximations.

Let $X$ be the (normalized) Fourier transform of one period of $x$. That
is,
\begin{equation}
    X(f) = \frac{1}{\sqrt{L}}\sum_{t=0}^{L-1} x(t)e^{-2 \pi i f t / L}, 
    \hspace{1em} f = 0, \cdots, L-1.
\end{equation}
Let $n_m(t)$ denote the $m$-th observed period of the additive Gaussian white noise and
let $N_m(f)$ denote its Fourier transform. That is, $n_m(t) := n(mL+t)$ and
\begin{equation}
    N_m(f) = \frac{1}{\sqrt{L}} \sum_{t=0}^{L-1} n_m(t) e^{-2 \pi i f t / L},
\end{equation}
Using the same conventions as above, the Fourier transform of 
the $m$-th observed period of $y$ is then
\begin{equation}
    Y_m(f) = X(f) + N_m(f).
\end{equation}

We will often suppress the frequency variable $f$, e.g., write $Y_m(f)$ as $Y_m$. 
The frequency variable does not play a significant role in
proving the main results and is cumbersome to carry around. Similarly,
proofs which involve expanding Fourier transform sums often lead 
to many summations. When convenient, we express them as a single 
multi-indexed sum.

\subsection{Cross-Correlation Periodogram}\label{cps method}
Using the classic Bartlett method, the estimate of the power spectral density of $y$ is
\begin{equation}
P^{(B)}_y(f) = \frac{1}{M} \sum_{m=0}^{M-1} |Y_m(f)|^2,
\end{equation}
In contrast, we define the cross-correlation periodogram
(cf. \cite{Nelson1993}) by
\begin{equation}
P^{(CCP)}_y(f) =  \frac{1}{M} \sum_{m=0}^{M-1} \mathrm{Re}[Y_{m}(f)Y^*_{m+1}(f)],
\end{equation}
where window indices $m$ are taken modulo $M$, i.e., $Y_M = Y_0$.

The proposed estimator can also be viewed as the average of 
the real part of cross-power spectral
estimates between adjacent windows.
From this perspective, the proposed estimator may be further
interpreted as a lag-one cross-spectral averaging method applied to 
successive time windows.
These viewpoints are appealing since 
cross-spectral estimators are known to suppress uncorrelated noise because the 
expected cross-spectrum 
of independent noise realizations is zero. The analysis in 
\cref{exp value section}  provides a 
quantitative characterization of this phenomenon for the proposed estimator when applied 
to adjacent windows of the same signal.

One notable difference from the method proposed by Nelson is that we take
the real part instead of the absolute value.
One may wonder why we suggest such a modification.
The idea of cross-correlation methods is to look at the formula
$|Y_m|^2 = Y_mY_m^*$ and replace one term with an adjacent window. This
leverages the independence of the noise components across windows while
still having the same target power spectrum due to the periodicity assumption. However, 
$Y_{m}Y_{m+1}^*$ is not purely real and has an imaginary
component. Although this could be resolved by taking the modulus after
summing the terms (indeed that is what Nelson originally proposed), 
for the purposes of power spectral density estimation it turns out
to be advantageous to only take the
real part.

The following computation illustrates the intuition for why taking the
real part suppresses noise but maintains the target
signal. Writing
$Y_m = X+N_m$ once again, we have
\begin{equation}
Y_{m}Y_{m+1}^* = (X+N_{m})(X^*+N_{m+1}^*) 
= |X|^2 + XN_{m+1}^*+X^*N_{m}+N_{m}N_{m+1}^*.
\end{equation}
The component we ideally want, $|X|^2$, is purely real
while the remaining noise terms contain real and imaginary parts.
Thus, eliminating the imaginary part only removes unwanted noise
and provides even better reduction of noise than simply taking the
complex modulus after summing. 

It should be noted that it is possible for the proposed cross-correlation
periodogram estimator
to be negative. In such
cases this usually means the noise is greater than the contribution of that 
frequency component of the signal. One can compute the absolute value
for just those components and
this method will still produce a smaller value (i.e., less contribution from noise) 
compared to Bartlett's method or Nelson's original method.
Specifically, \cref{main thm} along with \cref{with data exp} give bounds
on the expected value of the absolute value version of the estimator
along with an upper bound on the bias.

\subsection{Relaxation of the Window Alignment Assumption}\label{needed periodicity}
Our model and subsequent statistical analysis relies on the assumption 
in \cref{model} that our desired signal, $x(t)$, was exactly $L$-periodic,
i.e., that each window ends an exact multiple of one period after the start. 
This ensures
that each successive window does not carry an additional phase shift,
which simplifies the statistical analysis.
However, the cross-correlation periodogram can still produce
good results even in the presence of phase shifts. For example,
suppose that windows sizes are chosen in a way that makes the signal
component of the $m$-th window a shifted copy of $x$ by $\tau_m$.
That is, the $m$-th window is given by
\begin{equation}
y_m(t) = x(t-\tau_m)+n_m(t).
\end{equation}
We want to focus on how phase affects the main signal portion, so for
simplicity for the time being, let us assume that $n_m(t) = 0$ for all $m$
and $t$.
Because of the shift in the time domain, the $m$-th Fourier transform
is now given by
\begin{equation}
X_m(f) = e^{2\pi i \tau_m f/L}X(f).
\end{equation}
Consequently, when we compute terms in the cross-correlation periodogram
we would take the real component of
\begin{equation}
X_mX^*_{m+1} = e^{2 \pi i (\tau_m-\tau_{m+1})f/L}|X(f)|^2.
\end{equation}
If we additionally assume that the phase shift between windows is a 
fixed constant, i.e., $\tau = \tau_{m}-\tau_{m+1}$ for all $m$, 
then the phase factor becomes
\begin{equation}
X_mX^*_{m+1} = e^{2 \pi i \tau f/L}|X(f)|^2.
\end{equation}
Once we compute the real part this becomes
\begin{equation}
\mathrm{Re}(X_mX^*_{m+1}) = \cos(2 \pi \tau f / L) |X(f)|^2. \label{phase issue}
\end{equation}
Thus, the issue mainly arises when $\cos(2 \pi \tau f /L)$ is close to zero.
If, for example, $\tau$ is exactly $L/4$ and $f \equiv 1 \mod 4$ then
the argument of the cosine function is $\pi/2$ resulting in a zero.

That being said, the experiments in \cref{benign phase effect} show that
the estimator remains robust with respect to several phase mismatches even without
realignment. Even when $\tau$ is relatively large compared to the target frequency,
the phase frequently ends up far away enough from the critical phases of
$\pi/2$ and $3\pi/2$ to identify signals even with slight penalties to the 
strength of the target signal.

Moreover, while \eqref{phase issue} may seem like a limitation at first glance,
the computation of that equation
actually shows us how to fix the issue. 
In this scenario since the phase misalignment
is known and constant, 
we can multiply by an offsetting phase factor, $e^{-2 \pi i \tau f/L}$,
before computing the real part to realign the phase. 
Even if the phase misalignment is not known, 
since the critical values are when the phase angle is close 
to $\pi/2$ or $3\pi/2$, one can compute two estimators: The proposed
cross-correlation periodogram as is and a modified one that multiplies
cross-correlations by $i$ before computing the real part and averaging. 
If the signal was
buried by being close to the critical phases, the modified estimator will
recover the signal by advancing it by $\pi/2$ in phase and pushing the
cosine factor back close to 1.
This allows us to keep the
highlighted advantages of taking the real part over only the complex 
modulus at the end, cf. \cite{Nelson1993}.
We demonstrate this zeroing of the primary signal and its realignment in
\cref{phase effect sim}. 

\section{Statistical Analysis of the Estimator}\label{exp value section}
\subsection{Analysis of the Estimator on White Noise}\label{noise analysis}
For the following, we will focus our analysis primarily on the effect of
the CCP estimator
on pure white Gaussian noise and compute the first and second moments
of $P^{(CCP)}_n$. 
Since the focus is on this estimator, we will omit the superscript and
write $P_n$ whenever no ambiguity arises.
We also suppress the frequency
variable $f$ for notational simplicity. Throughout this section, we will also implicitly
assume that $f \neq 0$ and $f \neq L/2$. This avoids the degenerate edge cases
without complicating the proof and analysis.

The main result is the computation of the first and second moments of the
estimator $P_n$ as well as an upper bound on the expected value of
$|P_n|$. In particular, the bound on the absolute value implies that
the bias of the estimator that arises when computing the absolute value
decays as the number of windows increases. This is further emphasized by
\cref{with data exp} which gives an upper bound on the bias when applied
to periodic signals.

\begin{thm}\label{main thm}
Let $n(t)$ be an i.i.d.\@ sequence of Gaussian random variables with mean zero
and variance $\sigma^2$,
divided into $M$ windows of length $L$. If $M \geq 3$ then,
\begin{itemize}
\item[a.]$\displaystyle\mathbb{E}(P_n) = 0,$
\item[b.]$\displaystyle\mathrm{Var}(P_n) = \frac{\sigma^4}{2M},$
\item[c.]$\displaystyle\mathbb{E}|P_n| \leq \frac{\sigma^2}{\sqrt{2M}}.$
\end{itemize}
\end{thm}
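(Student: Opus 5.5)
Write $N_m = A_m + iB_m$ with $A_m, B_m$ real. For $f \neq 0, L/2$, the DFT with the normalization $1/\sqrt{L}$ gives
\[
A_m = \tfrac{1}{\sqrt{L}}\sum_t n_m(t)\cos(2\pi f t/L), \qquad B_m = -\tfrac{1}{\sqrt{L}}\sum_t n_m(t)\sin(2\pi f t/L).
\]
These are jointly Gaussian with mean zero. Using the orthogonality relations $\sum_t \cos^2 = \sum_t \sin^2 = L/2$ and $\sum_t \cos\sin = 0$, valid because $2f \not\equiv 0 \bmod L$, we get $\mathbb{E}A_m^2 = \mathbb{E}B_m^2 = \sigma^2/2$ and $\mathbb{E}A_mB_m = 0$. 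Hence $A_m, B_m$ are independent $\mathcal{N}(0,\sigma^2/2)$ variables. Since the windows use disjoint noise samples, the family $\{A_m, B_m\}_{m=0}^{M-1}$ is i.i.d. Then
\[
\mathrm{Re}[N_m N_{m+1}^*] = A_mA_{m+1} + B_mB_{m+1},
\]
so that
\[
P_n = \frac{1}{M}\sum_{m=0}^{M-1}\left(A_mA_{m+1}+B_mB_{m+1}\right)
\]
with cyclic indices.

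\emph{Part (a).} Since $M\ge 2$, $m$ and $m+1$ are distinct modulo $M$. Each product is therefore a product of independent mean-zero variables, so each term has mean zero and $\mathbb{E}P_n = 0$.

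\emph{Part (b).} Compute $\mathbb{E}P_n^2 = M^{-2}\sum_{m,k}\mathbb{E}[T_mT_k]$, where $T_m = A_mA_{m+1}+B_mB_{m+1}$. This is where Isserlis' theorem enters, though independence suffices here.
\begin{itemize}
\item For $m=k$: $\mathbb{E}T_m^2 = \mathbb{E}A_m^2\,\mathbb{E}A_{m+1}^2 + \mathbb{E}B_m^2\,\mathbb{E}B_{m+1}^2 + 2\,\mathbb{E}[A_mB_m]\,\mathbb{E}[A_{m+1}B_{m+1}] = 2(\sigma^2/2)^2 = \sigma^4/2$, because the cross term vanishes.
\item For $m\ne k$: the pairs $\{m,m+1\}$ and $\{k,k+1\}$ are distinct sets. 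Every product $A_mA_{m+1}A_kA_{k+1}$ (or its mixed analogues) then contains some index with a variable appearing exactly once, so the expectation is zero.
\end{itemize}
The case $m\ne k$ is where $M\ge3$ is needed. When $M=2$ the pairs $\{0,1\}$ and $\{1,0\}$ coincide, the two terms are identical, and the variance doubles. For $M\ge 3$ the pairs are distinct. So $\mathrm{Var}(P_n) = M\cdot(\sigma^4/2)/M^2 = \sigma^4/(2M)$.

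\emph{Part (c).} Apply Jensen (or Cauchy--Schwarz): $\mathbb{E}|P_n| \le \sqrt{\mathbb{E}P_n^2} = \sqrt{\mathrm{Var}(P_n)} = \sigma^2/\sqrt{2M}$, using (a).

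The only delicate point is the covariance bookkeeping in (b): verifying that off-diagonal terms vanish, which uses $M\ge3$ so that cyclically adjacent pairs never repeat. The preliminary fact that real and imaginary DFT parts are independent with equal variance $\sigma^2/2$ is routine given $f\ne 0, L/2$.
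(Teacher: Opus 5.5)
Your proof is correct and shares the paper's skeleton. Both write $P_n=\frac{1}{M}\sum_m C_m$ with $C_m=\mathrm{Re}(N_mN_{m+1}^*)=A_mA_{m+1}+B_mB_{m+1}$, show the $C_m$ are mean-zero and pairwise uncorrelated with variance $\sigma^4/2$, and finish with Lyapunov/Jensen.

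The difference is in how $\mathbb{E}(C_mC_{m'})$ is evaluated. The paper re-expands $C_m$ in the time domain as $\frac{1}{L}\sum_{t,t'}\cos(\theta_t-\theta_{t'})\,n_m(t)n_{m+1}(t')$. It handles the fourth-order sample moments with Isserlis' theorem (\cref{with isserlis}). It then needs the roots-of-unity identity of \cref{cosine sum} to get $\frac{1}{L^2}\sum_{t,t'}\cos^2(\theta_t-\theta_{t'})=\frac{1}{2}$. You use that same roots-of-unity fact once, up front, to fix the covariance of $(A_m,B_m)$. After that, everything is bookkeeping with $2M$ i.i.d.\ $\mathcal{N}(0,\sigma^2/2)$ variables. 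This avoids the fourfold sum and makes the role of $M\ge 3$ transparent: the cyclic pairs $\{m,m+1\}$ never coincide. In exchange, the paper's version keeps the cancellation of $L$ visible, which it uses to motivate the normalized DFT; in yours that cancellation is absorbed into $\mathbb{E}A_m^2=\mathbb{E}B_m^2=\sigma^2/2$.

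Your formulation also exposes something the Isserlis-based presentation obscures. In (b) you never actually use independence of $A_m$ and $B_m$, only $\mathbb{E}(A_mB_m)=0$, the equal variances, and independence across windows, because each window enters every monomial of $T_mT_k$ at most quadratically. So (b), and hence (c), hold verbatim for any i.i.d.\ zero-mean noise of variance $\sigma^2$, with no fourth-moment input. This contradicts the paper's remark after \cref{non-gauss corr} that leaving the Gaussian case would require the fourth cumulant.
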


Before we prove this theorem, it is worth taking a moment to discuss
the meaning of this result. 
This result shows
that the expected contribution of noise to this estimator is zero.
This is key in proving \cref{bias thm}, which shows that
the cross-correlation periodogram is an unbiased estimator. 
Moreover,
even if we compute the absolute value of the noise to fix negative
values, i.e., $|P_n|$, the expected value still
tends to zero as the number
of windows increases. In contrast, for Bartlett's method,
increasing the number of windows does not lead to any improvement in
the expected contribution of noise. Indeed, under the assumptions of the
main theorem, $\mathbb{E}(P^{(B)}_n) = \sigma^2$ for any number of windows $M$ \cite{Bartlett1950}. 

The omission of $L$ in this equality is deliberate since the $L$ is canceled in
\eqref{L cancel start} -- \eqref{L cancel end} of the proof 
of \cref{Cm covariance}. This removes the dependence of the result on $L$.
In particular, this means that the benefits of this estimator are obtained regardless
of how finely (or coarsely) the signal is sampled. This is the reasoning for
the choice of the normalized discrete Fourier transform in \cref{model} and
the unnormalized version would add an additional factor of $L$ to the moments
and bounds obtained.

Note that even for just three windows, $\mathbb{E}|P_n|$ is
less than half of $\sigma^2$. This suggests that the cross-correlation
periodogram is an immediate improvement over Bartlett's method for reducing noise
and not just an asymptotic improvement that requires a large number of windows.
We provide empirical evidence that validate these bounds in \cref{sim section}. 

Before continuing, we note an immediate corollary of this theorem. The
variance of the cross-correlation periodogram estimator on noise
is strictly less than the variance obtained from Bartlett's method where
$\mathrm{Var}(P^{(B)}_n) = \sigma^4/M$.
Specifically, a variance of less than half of that of Bartlett's method is obtained,
additionally improving upon the main draw of Bartlett's method.

\begin{corr}\label{ccp var lemma}
Suppose $n(t)$ is an i.i.d.\@ sequence of Gaussian random variables
with mean zero and variance $\sigma^2$
divided into $M$ windows of length $L$. If $M \geq 3$, then
\begin{equation}
\mathrm{Var}(P^{(CCP)}_n) < \mathrm{Var}(P^{(B)}_n).
\end{equation}
\end{corr}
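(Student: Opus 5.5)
The corollary follows by comparing two closed-form variances. Part (b) of \cref{main thm} gives $\mathrm{Var}(P^{(CCP)}_n) = \sigma^4/(2M)$ whenever $M \geq 3$, so the main work is to establish the classical value $\mathrm{Var}(P^{(B)}_n) = \sigma^4/M$ under the same conventions: normalized DFT, $f \neq 0$, and $f \neq L/2$. The conclusion is then the strict inequality $\sigma^4/(2M) < \sigma^4/M$, which holds for every $\sigma > 0$ and $M \geq 1$.

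To verify the Bartlett variance, use the fact that the windows $n_m$ are disjoint blocks of an i.i.d.\ sequence, so the $N_m(f)$ are independent across $m$. This gives $\mathrm{Var}(P^{(B)}_n) = \frac{1}{M^2}\sum_{m} \mathrm{Var}(|N_m|^2) = \frac{1}{M}\mathrm{Var}(|N_0|^2)$.

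Next write $N_0 = A + iB$ with
\begin{equation*}
A = L^{-1/2}\sum_t n_0(t)\cos(2\pi f t/L), \qquad B = -L^{-1/2}\sum_t n_0(t)\sin(2\pi f t/L).
\end{equation*}
Both $A$ and $B$ are jointly Gaussian with mean zero. The orthogonality relations $\sum_t \cos^2(2\pi ft/L) = \sum_t \sin^2(2\pi ft/L) = L/2$ and $\sum_t \cos(2\pi ft/L)\sin(2\pi ft/L) = 0$ hold for $f \neq 0, L/2$. From them, $A$ and $B$ are independent, each with variance $\sigma^2/2$. Hence $|N_0|^2 = A^2 + B^2$ is $\frac{\sigma^2}{2}\chi^2_2$, i.e.\ exponential with mean $\sigma^2$, so its variance is $\sigma^4$. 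Equivalently, Isserlis' theorem gives $\mathbb{E}|N_0|^4 = 2\sigma^4$, matching the computation in \cref{with isserlis}.

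Substituting, $\mathrm{Var}(P^{(B)}_n) = \sigma^4/M > \sigma^4/(2M) = \mathrm{Var}(P^{(CCP)}_n)$, which proves the corollary; in fact the CCP variance is exactly half the Bartlett variance. There is no real obstacle, because the heavy lifting is in \cref{main thm}(b). The only care needed is that the excluded frequencies $f = 0, L/2$ are exactly those where $B \equiv 0$ and the Bartlett variance would instead be $2\sigma^4/M$. These are already excluded by the standing assumption of this section.
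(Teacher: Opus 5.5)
Your proposal is correct and is essentially the paper's argument: the paper obtains the corollary by comparing $\sigma^4/(2M)$ from \cref{main thm}(b) with the classical Bartlett variance $\sigma^4/M$, which it simply quotes, while you additionally derive that value from $|N_0|^2\sim\frac{\sigma^2}{2}\chi^2_2$ for $f\neq 0, L/2$. Two small points: strictness needs $\sigma>0$ (as you note), and $\mathbb{E}|N_0|^4=2\sigma^4$ is a within-window moment, so it is not literally the cross-window computation of \cref{with isserlis}.
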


To break up proving \cref{main thm}, we
begin by detailing a few lemmas that help organize the computation of quantities
needed for the proof.
\begin{lemma}\label{with isserlis}
Let $M \geq 3$. For every choice of $t,t',s,s',m$ and $m'$, we have that
\begin{equation}
\mathbb{E}(n_m(t)n_{m+1}(t')n_{m'}(s)n_{m'+1}(s')) = 
\delta_{mm'}\delta_{ts}\delta_{t's'}\sigma^4.
\end{equation}
\end{lemma}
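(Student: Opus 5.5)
The plan is to apply Isserlis' (Wick's) theorem to the four jointly Gaussian, mean-zero variables $A=n_m(t)$, $B=n_{m+1}(t')$, $C=n_{m'}(s)$, $D=n_{m'+1}(s')$. This gives
\begin{equation}
\mathbb{E}(ABCD) = \mathbb{E}(AB)\mathbb{E}(CD) + \mathbb{E}(AC)\mathbb{E}(BD) + \mathbb{E}(AD)\mathbb{E}(BC).
\end{equation}
Since the $n(t)$ are i.i.d.\@ with variance $\sigma^2$ and $n_m(t)=n(mL+t)$, we have $\mathbb{E}(n_a(u)n_b(v)) = \sigma^2\delta_{ab}\delta_{uv}$. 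Window indices are taken modulo $M$, and $t,t',s,s'$ range over $\{0,\dots,L-1\}$. So each pairwise expectation is either $\sigma^2$ or $0$, and I then go through the three pairings one at a time.

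\textbf{First pairing} $\mathbb{E}(AB)\mathbb{E}(CD)$. The factor $\mathbb{E}(AB)$ requires $m \equiv m+1 \pmod M$, i.e.\@ $1\equiv 0 \pmod M$. This is impossible since $M\geq 3$ (indeed $M \ge 2$ suffices), so the term vanishes. The same argument kills $\mathbb{E}(CD)$.

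\textbf{Second pairing} $\mathbb{E}(AC)\mathbb{E}(BD)$. This is nonzero exactly when $m\equiv m'$, $t=s$, and $m+1\equiv m'+1$, $t'=s'$. The two window conditions are equivalent, so the term equals $\delta_{mm'}\delta_{ts}\delta_{t's'}\sigma^4$, which is exactly the claimed right-hand side.

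\textbf{Third pairing} $\mathbb{E}(AD)\mathbb{E}(BC)$. This is the step where the hypothesis $M\ge 3$ is really used, and it is the only subtle point. The term requires $m\equiv m'+1$ and $m+1\equiv m'$ simultaneously. Adding these congruences gives $m+m'+1 \equiv m+m'+1$, which carries no information. Substituting instead gives $m \equiv m+2 \pmod M$, i.e.\@ $2\equiv 0 \pmod M$. This fails for $M\geq 3$, so the third term vanishes. The case $M=2$ would give an extra contribution, which explains the hypothesis.

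Summing the three pairings yields the stated identity. The only care needed is to consistently interpret indices modulo $M$, including the wrap-around $Y_M=Y_0$, when checking that the first and third pairings cannot survive.
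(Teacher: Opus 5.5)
Your proposal is correct and follows the paper's route. Both apply Isserlis' theorem and check the three pairings: the first always vanishes because $m$ and $m+1$ are distinct windows, the second gives exactly $\delta_{mm'}\delta_{ts}\delta_{t's'}\sigma^4$, and the third would need $m'\equiv m-1$ and $m'\equiv m+1 \pmod M$ simultaneously, which is impossible for $M\geq 3$. The one point the paper states that you only assert is that the four variables stay jointly Gaussian when some coincide, and this is immediate because they are coordinates of the i.i.d.\ Gaussian vector $(n(0),\dots,n(LM-1))$.
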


\begin{proof}
The argument of the expected value may be dependent for certain values of $t,t',s$ and $s'$. However, in those cases, the dependent terms are exact copies of each other, and so any linear combination of the terms is still normally distributed. Thus, the argument is jointly normal for every choice of parameters. By Isserlis' theorem we have
\begin{align}
\mathbb{E}(n_m(t)n_{m+1}(t')n_{m'}(s)n_{m'+1}(s')) &= \mathbb{E}(n_m(t)n_{m+1}(t')) \mathbb{E}(n_{m'}(s)n_{m'+1}(s')) \\
~& + \mathbb{E}(n_m(t)n_{m'}(s))\mathbb{E}(n_{m+1}(t')n_{m'+1}(s')) \\
~& + \mathbb{E}(n_m(t)n_{m'+1}(s'))\mathbb{E}(n_{m+1}(t')n_{m'}(s)).
\end{align}
Focusing on each term one at a time, both expectations in the first term are
expectations of products of random variables that are always independent. Hence
\begin{equation}
\mathbb{E}(n_m(t)n_{m+1}(t')) \mathbb{E}(n_{m'}(s)n_{m'+1}(s'))
= \mathbb{E}(n_m(t)) \mathbb{E}(n_{m+1}(t'))\mathbb{E}(n_{m'}(s))
\mathbb{E}(n_{m'+1}(s')) = 0,
\end{equation}
since each term has zero mean.

Turning our attention to the second term,
we have dependence only when windows and indices all match simultaneously. That is,
we only have dependence when $m=m'$, $s=t$, and $s'=t'$. In those cases
the arguments in each expectation are $n^2_m(t)$ and $n_{m+1}^2(t')$, respectively. Thus,
\begin{equation}
\mathbb{E}(n_m(t)n_{m'}(s))\mathbb{E}(n_{m+1}(t')n_{m'+1}(s'))
= \delta_{mm'}\delta_{ts}\delta_{t's'} \sigma^4.
\end{equation}

For the third and final term, a necessary condition for dependence would be $m = m'+1$ and
$m+1 = m'$. However, this would imply that $m' = m-1$ and $m'=m+1$ simultaneously, which
is only possible if $M = 2$. Since $M \geq 3$, the third term is also the product of two
expectations containing two independent zero mean variables, and is also zero.
Adding the results of all three cases together proves the lemma.
\end{proof}

\begin{lemma}\label{cosine sum}
Let $\theta_t = 2 \pi f t/L$ and $\theta_{t'} = 2 \pi f t'/L$.
Then,
\begin{equation}
\sum_{t,t'} \cos(2\theta_t-2\theta_{t'}) = 0. 
\end{equation}
\end{lemma}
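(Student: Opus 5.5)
We need to prove that $\sum_{t,t'=0}^{L-1}\cos(2\theta_t-2\theta_{t'})=0$, where $f\neq 0$ and $f\neq L/2$ as assumed throughout this section. The plan is to write the cosine as the real part of a complex exponential, so that the double sum factors into a product of two geometric sums. Specifically,
\begin{equation}
\sum_{t,t'} \cos(2\theta_t-2\theta_{t'}) = \mathrm{Re}\sum_{t,t'} e^{2i\theta_t}e^{-2i\theta_{t'}} = \mathrm{Re}\left[\left(\sum_{t=0}^{L-1} e^{4\pi i f t/L}\right)\left(\sum_{t'=0}^{L-1} e^{-4\pi i f t'/L}\right)\right] = \left|\sum_{t=0}^{L-1} e^{4\pi i f t/L}\right|^2 .
\end{equation}
The problem therefore reduces to showing that the single sum $S=\sum_{t=0}^{L-1} e^{4\pi i f t/L}$ vanishes.

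Next I would evaluate $S$ as a finite geometric series with ratio $r=e^{4\pi i f/L}=e^{2\pi i (2f)/L}$. We have $r^L=e^{4\pi i f}=1$. As long as $r\neq 1$, this gives $S=(1-r^L)/(1-r)=0$, which completes the argument.

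The only real point to check is that $r\neq 1$, which is equivalent to $2f\not\equiv 0 \pmod L$. Since $f\in\{0,\dots,L-1\}$, the condition $2f\equiv 0\pmod L$ holds exactly when $f=0$ or $f=L/2$ (the latter only when $L$ is even). Both cases are excluded by the standing assumption of this section, so $r\neq1$ and the lemma follows.

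I would add a brief remark that the hypothesis cannot be dropped. At $f=0$ or $f=L/2$ every cosine term equals $1$, so the sum is $L^2$ rather than zero. This explains why these frequencies were excluded in the setup of \cref{noise analysis}.
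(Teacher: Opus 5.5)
Your proof is correct and follows the same route as the paper. Both arguments rewrite the cosine via complex exponentials, factor the double sum into $\sum_t e^{2i\theta_t}\cdot\sum_{t'}e^{-2i\theta_{t'}}$, and show that the single sum vanishes. Your geometric-series check that $e^{4\pi i f/L}\neq 1$ exactly when $f\neq 0, L/2$ is a welcome addition: it makes explicit where the section's standing assumption enters, which the paper leaves implicit in its ``full set of roots of unity'' remark (literally accurate only when $\gcd(2f,L)=1$).
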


\begin{proof}
First, note that we can write $\cos\theta$ in terms of $e^{i\theta}$ terms
using the identity
\begin{equation}
\cos\theta = \frac{e^{i\theta}+e^{-i\theta}}{2}.
\end{equation}
Hence,
\begin{equation}
\cos(2\theta_t - 2\theta_{t'}) = 
\frac{e^{2i\theta_t}e^{-2i\theta_{t'}} + e^{-2i\theta_t}e^{2i\theta_{t'}}}{2}.
\end{equation}
Notably, the second term is the complex conjugate of the first and it suffices
to show that the first term sums to zero. Since we expressed the terms
as complex roots of unity, we see that
\begin{equation}
    \sum_{t,t'} e^{2i\theta_t} e^{-2i \theta_{t'}} =
    \sum_t e^{2i \theta_t} \sum_{t'} e^{-2i \theta_{t'}}
     = 0,
\end{equation}
since each summation is now a sum of a full set of roots of unity.
\end{proof}

\begin{lemma}\label{Cm covariance}
Let $M \geq 3$. For each $m$, define the random variable $C_m:= \mathrm{Re}(N_mN^*_{m+1})$. Then,
\begin{itemize}
    \item[(a)] $\mathbb{E}(C_m) = 0$.
    \item[(b)] $\mathrm{Cov}(C_m,C_{m'}) = \delta_{mm'}\sigma^4/2$.
\end{itemize}
\end{lemma}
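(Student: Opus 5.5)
We write $C_m$ explicitly in terms of the time-domain noise and reduce both moments to sums handled by \cref{with isserlis} and \cref{cosine sum}. With $\theta_t = 2\pi f t/L$,
\begin{equation}
N_m N^*_{m+1} = \frac{1}{L}\sum_{t,t'} n_m(t)\,n_{m+1}(t')\, e^{-i(\theta_t-\theta_{t'})},
\end{equation}
so that
\begin{equation}
C_m = \frac{1}{L}\sum_{t,t'} n_m(t)\,n_{m+1}(t')\cos(\theta_t-\theta_{t'}).
\end{equation}
For part (a) we take expectations term by term. Since $M\ge 3$, windows $m$ and $m+1$ are distinct, so $n_m(t)$ and $n_{m+1}(t')$ are independent with zero mean. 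Every summand therefore has zero expectation, and $\mathbb{E}(C_m)=0$.

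For part (b), because $\mathbb{E}(C_m)=0$ the covariance equals $\mathbb{E}(C_mC_{m'})$. Expanding the product gives
\begin{equation}
\mathbb{E}(C_mC_{m'}) = \frac{1}{L^2}\sum_{t,t',s,s'} \mathbb{E}\bigl(n_m(t)n_{m+1}(t')n_{m'}(s)n_{m'+1}(s')\bigr)\cos(\theta_t-\theta_{t'})\cos(\theta_s-\theta_{s'}).
\end{equation}
By \cref{with isserlis}, the expectation equals $\delta_{mm'}\delta_{ts}\delta_{t's'}\sigma^4$. This immediately gives zero when $m\neq m'$. When $m=m'$, the quadruple sum collapses to
\begin{equation}
\frac{\sigma^4}{L^2}\sum_{t,t'}\cos^2(\theta_t-\theta_{t'}).
\end{equation}

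Next we apply $\cos^2 u = \tfrac12(1+\cos 2u)$. This splits the sum into $\tfrac12\sum_{t,t'}1 = L^2/2$ plus $\tfrac12\sum_{t,t'}\cos(2\theta_t-2\theta_{t'})$. The second piece vanishes by \cref{cosine sum}; that lemma requires $2f\not\equiv 0 \pmod L$, which is exactly the standing assumption $f\neq 0, L/2$. The factor $L^2$ then cancels, leaving $\sigma^4/2$ and establishing $\mathrm{Cov}(C_m,C_{m'})=\delta_{mm'}\sigma^4/2$.

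The main point needing care is the case analysis behind \cref{with isserlis} when $m'=m\pm1$. For example, with $m'=m+1$ the factors $n_{m+1}(t')$ and $n_{m'}(s)$ lie in the same window, so $C_m$ and $C_{m+1}$ share the noise vector of window $m+1$. We must still confirm that the covariance vanishes. It does, because the pairing that would connect these two shared factors leaves the remaining pair $n_m(t)$ and $n_{m'+1}(s')$, which lie in distinct windows when $M\ge3$ and so contribute zero. This is already handled inside the lemma. Apart from this, the remaining bookkeeping is checking the $L$-cancellation, which is where the normalization of the DFT matters.
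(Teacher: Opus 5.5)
Your proof is correct and follows essentially the same route as the paper: write $C_m=\frac{1}{L}\sum_{t,t'}\cos(\theta_t-\theta_{t'})\,n_m(t)\,n_{m+1}(t')$, get (a) from independence of distinct windows, and get (b) by collapsing the quadruple sum with \cref{with isserlis}, applying $\cos^2 u=\tfrac12(1+\cos 2u)$, and removing the oscillatory part with \cref{cosine sum}. The only difference is cosmetic: you read off the cosine kernel directly from the complex exponentials instead of splitting $N_m=A_m+iB_m$. Your explicit remark that \cref{cosine sum} needs $2f\not\equiv 0 \pmod L$ (i.e.\ $f\neq 0, L/2$) is a welcome precision that the paper leaves implicit.
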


\begin{proof}
Let $A_m$ and $B_m$ be the real and imaginary parts of $N_m$, respectively. That is
$N_m = A_m + iB_m$. Expanding the product $N_mN_{m+1}^*$ in terms of these random
variables gives
\begin{align}
N_mN_{m+1}^* & 
= (A_m + iB_m)(A_{m+1}- iB_{m+1}) \\
~&= (A_m A_{m+1} + B_mB_{m+1}) + i(A_{m+1}B_m - A_{m}B_{m+1}).
\end{align}
Taking the real part, we obtain
\begin{equation}
C_m = A_mA_{m+1} + B_m B_{m+1}.
\end{equation}
Now, let $\theta_t: = 2 \pi f t/ L$ and $\theta_{t'} := 2 \pi f t' / L$. Since we can write $A_m$ and $B_m$ in terms of cosine and sine we have
\begin{equation}
A_mA_{m+1} = \frac{1}{L}\sum_{t,t'}\cos\theta_t\cos\theta_{t'}
n_m(t)n_{m+1}(t'),
\end{equation}
and
\begin{equation}
B_mB_{m+1} = \frac{1}{L}\sum_{t,t'}\sin\theta_{t}\sin\theta_{t'}
n_m(t)n_{m+1}(t').
\end{equation}
After adding $A_mA_{m+1}$ and $B_mB_{m+1}$, the $n_m(t)n_{m+1}(t')$ terms are factorable and
after the appropriate trigonometric sum identity we obtain
\begin{align}
C_m &= \frac{1}{L}\sum_{t,t'}(\cos\theta_t\cos\theta_{t'} + \sin\theta_t
\sin\theta_{t'}) n_m(t)n_{m+1}(t') \\
~ &= \frac{1}{L}\sum_{t,t'} \cos(\theta_t - \theta_{t'})n_m(t)n_{m+1}(t').
\end{align}

From here, $\mathbb{E}(C_m) = 0$ quickly follows since $\cos(\theta_t - \theta_{t'})$ is
deterministic. Using this and leveraging independence of separate windows we get
\begin{align}
\mathbb{E}(C_m) &=  \frac{1}{L}\sum_{t,t'} \cos(\theta_t - \theta_{t'})\mathbb{E}(n_m(t)n_{m+1}(t')) \\
~ &= \frac{1}{L}\sum_{t,t'} \cos(\theta_t - \theta_{t'})\mathbb{E}(n_m(t))\mathbb{E}(n_{m+1}(t'))  = 0.
\end{align}

To compute covariance, first note that because $\mathbb{E}(C_m) = 0$ for all $m$,
$\mathrm{Cov}(C_m,C_{m'}) = \mathbb{E}(C_mC_{m'})$. 
Expanding that quantity and applying \cref{with isserlis} gives
\begin{align}
    \mathbb{E}(C_mC_{m'}) &= \frac{1}{L^2}\sum_{t,t',s,s'} \cos(\theta_t-\theta_{t'})
    \cos(\theta_{s}-\theta_{s'})
    \mathbb{E}(n_m(t) n_{m+1}(t') n_{m'}(s) n_{m'+1}(s')) \label{L cancel start}\\
    ~ &= \frac{1}{L^2}\sum_{t,t',s,s'}
    \cos(\theta_t-\theta_{t'})
    \cos(\theta_{s}-\theta_{s'})
    \delta_{mm'}\delta_{ts}\delta_{t's'}\sigma^4 \\
    ~ &= \frac{\delta_{mm'}\sigma^4}{L^2}\sum_{t,t'}
    \cos^2(\theta_t-\theta_{t'})\\
    ~ &= \frac{\delta_{mm'}\sigma^4}{2L^2}\sum_{t,t'}
    (1+\cos(2(\theta_t-\theta_{t'}))) \\
    ~ &= \frac{\delta_{mm'}\sigma^4}{2} + \frac{\delta_{mm'}\sigma^4}{2L} \sum_{t'=0}^{L-1}
    \cos(2(\theta_t-\theta_{t'})) \\
    ~&= \frac{\delta_{mm'}\sigma^4}{2}, \label{L cancel end}
\end{align}
where the summation term is zero by \cref{cosine sum}.
\end{proof}

\begin{proof}[Proof of \cref{main thm}.] First, define $C_m := \mathrm{Re}(N_mN^*_{m+1})$ as in
\cref{Cm covariance}. Then, $P_n$ can be written as
\begin{equation}
P_n = \frac{1}{M}\sum_m C_m.
\end{equation}
From \cref{Cm covariance}, $\mathbb{E}(C_m) = 0$ for all $m$. Thus,
\begin{equation}
\mathbb{E}(P_n)=\mathbb{E}\left(\frac{1}{M} \sum_m C_m \right) = \frac{1}{M}\sum_m\mathbb{E}(C_m) = 0,
\end{equation}
establishing the first result.
Since $\mathrm{Cov}(C_m,C_m') = \delta_{mm'}\sigma^4/2$,
\begin{align}
    \mathrm{Var}(P_n) &=
    \frac{1}{M^2} \sum_{m,m'}\mathrm{Cov}(C_m,C_{m'}) \\
    ~ &= \frac{1}{M^2}\sum_{m,m'}\frac{\delta_{mm'}\sigma^4}{2} \\
    ~ &= \frac{\sigma^4}{2M},
\end{align}
establishing the second result.
We now immediately get that the second moment and variance are the same, 
i.e., $\mathbb{E}(P_n^2) = \mathrm{Var}(P_n)$.
Finally, by Lyapunov's inequality
\begin{equation}
\mathbb{E}|P_n| \leq [\mathbb{E}(P_n^2)]^{1/2} = \frac{\sigma^2}{\sqrt{2M}},
\end{equation}
which establishes the last result.
\end{proof}

\subsection{Analysis of the Estimator on Periodic Signals}\label{periodic analysis}
We now turn to analyzing the estimator when used on periodic signals.
As in the previous section, we omit the superscript of $P^{(CCP)}_y$ and write $P_y$
since there is no ambiguity with Bartlett's method in this section.
Applying \cref{main thm} to the model outlined in \cref{model}, we
see that $P_y$ is an unbiased estimator for periodic signals with
additive white Gaussian noise.

\begin{thm}\label{bias thm}
Suppose $y(t) = x(t) + n(t)$ is divided into $M$ windows of length $L$ where
$x(t)$ is $L$-periodic and $n(t)$ is a length $LM$ sequence of i.i.d.\@ Gaussian random
variables with mean zero and variance $\sigma^2$. If $M \geq 3$, then
\begin{equation}
\mathbb{E}(P_y) - |X|^2 = 0.
\end{equation}
\end{thm}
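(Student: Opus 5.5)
The plan is to expand each cross-correlation term using $Y_m = X + N_m$, which is valid because $x$ is $L$-periodic and every window therefore carries the same signal transform $X$. As computed in \cref{cps method},
\begin{equation*}
Y_mY_{m+1}^* = |X|^2 + XN_{m+1}^* + X^*N_m + N_mN_{m+1}^*.
\end{equation*}
Taking real parts and averaging over $m$ then splits the estimator as
\begin{equation*}
P_y = |X|^2 + \frac{1}{M}\sum_{m=0}^{M-1}\mathrm{Re}\left(XN_{m+1}^* + X^*N_m\right) + P_n,
\end{equation*}
where $P_n$ is exactly the pure-noise estimator studied in \cref{main thm}.

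Next I take expectations term by term using linearity. The first term $|X|^2$ is deterministic. For the last term, \cref{main thm}(a) gives $\mathbb{E}(P_n)=0$ directly, and this is where the hypothesis $M\ge 3$ is used.

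For the cross terms, $X$ is deterministic, so $\mathbb{E}(\mathrm{Re}(X^*N_m)) = \mathrm{Re}(X^*\,\mathbb{E}(N_m))$, and similarly for the term involving $N_{m+1}^*$. Each $N_m$ is a finite linear combination of the zero-mean variables $n_m(t)$ with deterministic coefficients $L^{-1/2}e^{-2\pi i f t/L}$, so $\mathbb{E}(N_m)=0$. Hence every cross term has zero mean. Summing the three contributions yields $\mathbb{E}(P_y)=|X|^2$.

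No step should be a real obstacle. The only point needing care is that the linear-in-noise cross terms do not cancel pathwise; they vanish only in expectation, via the zero mean of $N_m$. The $M\ge 3$ condition enters solely through the appeal to \cref{main thm}, which in turn rests on \cref{with isserlis}. For the mean alone it is in fact unnecessary, since independence of distinct windows already gives $\mathbb{E}(N_mN_{m+1}^*)=0$ when $M\ge 2$.
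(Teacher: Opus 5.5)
Your proposal is correct and matches the paper's proof essentially step for step. Like the paper, you expand $Y_mY_{m+1}^*$, eliminate the linear cross terms via $\mathbb{E}(N_m)=0$, and invoke \cref{main thm}(a) for $\mathbb{E}(P_n)=0$. Your side remark is also right: the mean computation only needs independence of distinct windows, so $M\ge 2$ suffices (for $M=1$ the wraparound term becomes $|N_0|^2$, whose mean is $\sigma^2$).
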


\begin{proof}
Recall that $Y_mY^*_{m+1} = |X|^2 + XN_{m+1}^*+X^*N_{m}+N_{m}N_{m+1}^*$.
Furthermore, since $|X|^2$
is purely real and deterministic we have that
\begin{align}
    \mathbb{E}(P_y) - |X|^2 &= 
    \mathbb{E}\left(\frac{1}{M}\sum_{m=0}^{M-1}\mathrm{Re}
    (XN^*_{m+1} + X^*N_m+N_mN^*_{m+1})\right)\\
    ~&= \frac{1}{M}\sum_{m=0}^{M-1} \mathrm{Re}(X \mathbb{E}(N^*_{m+1}) + X^*
    \mathbb{E}(N_m))  + \mathbb{E}
    \left(\frac{1}{M} \sum_{m=0}^{M-1}\mathrm{Re}
    (N_mN^*_{m+1})\right) \\
    ~&= \frac{1}{M}\sum_{m=0}^{M-1} \mathrm{Re}(X \mathbb{E}(N^*_{m+1}) + X^*
    \mathbb{E}(N_m))  + \mathbb{E}(P_n). \label{bias computation}
\end{align}
Since $N_m$ and $N^*_{m+1}$ are sums of zero mean independent Gaussian random variables,
their expectation is zero. The last term has expectation zero by \cref{main thm}
and thus the total expectation is zero.
\end{proof}

As previously noted, since power spectral density is supposed to be positive,
we can instead compute $|P_y|$ to ensure positivity at
the cost of some bias. This is in line with cross-power spectrum
techniques where the absolute value is used at a similar cost of bias \cite{CNelson2014}.
We can compute an upper bound on the bias using the
last part of \cref{main thm}. This confirms the property that the bias
decreases as the number of windows $M$ increases and gives an asymptotic bound.

\begin{thm}\label{with data exp}
Suppose $y(t) = x(t) + n(t)$ is divided into $M$ windows of length $L$ where $x(t)$ is
$L$-periodic and $n(t)$ is a length $LM$ sequence of i.i.d.\@ Gaussian random variables
with mean zero and variance $\sigma^2$. If $M \geq 3$ then,
\begin{equation}
\mathbb{E}|P_y(f)|-|X(f)|^2\leq 
\sigma \sqrt{\pi}|X(f)| +\frac{\sigma^2}{\sqrt{2M}}.
\end{equation}
\end{thm}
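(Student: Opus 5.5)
We decompose $P_y$ exactly as in the proof of \cref{bias thm}. Since $|X|^2$ is real and deterministic,
\begin{equation*}
P_y = |X|^2 + \frac{1}{M}\sum_{m=0}^{M-1}\mathrm{Re}\left(XN^*_{m+1} + X^*N_m\right) + P_n .
\end{equation*}
Applying the triangle inequality and taking expectations gives
\begin{equation*}
\mathbb{E}|P_y| \leq |X|^2 + \mathbb{E}\left|\frac{1}{M}\sum_{m}\mathrm{Re}\left(XN^*_{m+1} + X^*N_m\right)\right| + \mathbb{E}|P_n| .
\end{equation*}
Part (c) of \cref{main thm} bounds the last term by $\sigma^2/\sqrt{2M}$. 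It therefore remains to show that the cross term, call it $T$, satisfies $\mathbb{E}|T| \leq \sigma\sqrt{\pi}|X|$.

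For the cross term we use the simplest bound, which also gives the stated constant. Since $\mathrm{Re}(XN^*_{m+1}) = \mathrm{Re}(X^*N_{m+1})$, the triangle inequality gives
\begin{equation*}
|T| \leq \frac{1}{M}\sum_{m}\left(|X||N_{m+1}| + |X||N_m|\right),
\end{equation*}
and hence
\begin{equation*}
\mathbb{E}|T| \leq \frac{2|X|}{M}\sum_m \mathbb{E}|N_m| = 2|X|\,\mathbb{E}|N_0|,
\end{equation*}
using that the windows are identically distributed.

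The key computation is the distribution of $N_m(f)$ for $f\neq 0, L/2$. Write $N_m = A_m + iB_m$. Then $A_m = L^{-1/2}\sum_t \cos\theta_t\, n_m(t)$ and $B_m = -L^{-1/2}\sum_t\sin\theta_t\, n_m(t)$. These are jointly Gaussian with mean zero. Their variances are $\frac{\sigma^2}{L}\sum_t\cos^2\theta_t = \sigma^2/2$ and $\frac{\sigma^2}{L}\sum_t\sin^2\theta_t = \sigma^2/2$. Their covariance is $-\frac{\sigma^2}{L}\sum_t \cos\theta_t\sin\theta_t = 0$. All three facts follow from the same roots-of-unity argument as \cref{cosine sum}, via $\sum_t e^{2i\theta_t}=0$.

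Hence $|N_m|$ is Rayleigh distributed with scale parameter $\sigma/\sqrt{2}$, and $\mathbb{E}|N_m| = \frac{\sigma}{\sqrt 2}\sqrt{\pi/2} = \frac{\sigma\sqrt{\pi}}{2}$. Substituting gives $\mathbb{E}|T| \leq \sigma\sqrt{\pi}|X|$. Combining this with the bound on $\mathbb{E}|P_n|$ and subtracting $|X|^2$ yields the theorem.

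The only step requiring care is the Rayleigh computation. It needs both the vanishing of the covariance between $A_m$ and $B_m$ and the equality of their variances, and both rely on the standing assumption $f\neq 0, L/2$. The rest is bookkeeping.

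We note that a sharper bound is available. Since $T$ is itself a mean-zero Gaussian, Cauchy--Schwarz would give a bound decaying like $|X|\sigma/\sqrt{M}$. The cruder triangle-inequality route is the one whose constant $\sqrt{\pi}$ matches the statement, so that is the plan we follow.
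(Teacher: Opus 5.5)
Your proposal is correct and follows essentially the same route as the paper: split off $P_n$ with the triangle inequality and bound it by part (c) of \cref{main thm}, bound the cross terms by $|X|(|N_m|+|N_{m+1}|)$, and use the Rayleigh mean $\mathbb{E}|N_m|=\sigma\sqrt{\pi}/2$. Your explicit check that $A_m$ and $B_m$ have equal variance $\sigma^2/2$ and zero covariance for $f\neq 0, L/2$ fills in the ``elementary computation'' the paper only asserts, and your closing remark is also sound: $\mathbb{E}(T^2)=2\sigma^2|X|^2/M$, so Cauchy--Schwarz would make the signal--noise term decay in $M$ as well.
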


Although we still omit the frequency variable $f$ in the proof, 
in this case we include the frequency index $f$ in the theorem statement.
In general $|X(f)|^2$ will not have
a uniform spectrum and so we include the variable to stress this dependence.
In particular, when $|X(f)|^2$ is close to zero, the bias is essentially
only coming from the noise.

\begin{proof}
Since $|X|^2$ is deterministic, note that
\begin{equation}
    \mathbb{E}|P_y| - |X|^2 = \mathbb{E}(|P_y|-|X|^2),
\end{equation}
and so we analyze the expected value of $|P_y| - |X|^2$.
If we expand the argument of the complex modulus in $P_y$,
the $|X|^2$ term is independent of the index $m$ and we can write
\begin{equation}
|P_y| = \left|
|X|^2 + \frac{1}{M}\sum_{m=1}^M \mathrm{Re}(XN_{m+1}^*+X^*N_m+N_mN_{m+1}^*)
\right|.
\end{equation}
Using the triangle inequality to separate the $|X|^2$ term, we have that
\begin{equation}
    |P_y| \leq |X|^2 + \frac{1}{M} \left|
    \sum_{m=1}^M \mathrm{Re}(XN^*_{m+1} + X^*N_m + N_m N^*_{m+1}) 
    \right|. \label{triangle Py}
\end{equation}
Thus, subtracting $|X|^2$ from both sides of \eqref{triangle Py} and
applying the triangle inequality two more times we get
\begin{align}
    |P_y| - |X|^2 &\leq \frac{1}{M}\left|
        \sum_{m=1}^M
        \mathrm{Re}(XN_{m+1}^*+X^*N_m+N_mN_{m+1}^*)
    \right| \\
    ~ &\leq \frac{1}{M} \left| \sum_{m=1}^M \mathrm{Re}(XN_{m+1}^*
    + X^*N_m) \right| + \left| \frac{1}{M}\sum_{m=1}^M 
    \mathrm{Re}(N_mN^*_{m+1}) \right| \\
    ~ &\leq |P_n| + \frac{1}{M} \sum_{m=1}^M |XN_{m+1}^*| + |X^*N_m|.
    \label{bias ineq computation}
\end{align}

Now we examine the expected value of each component of the resulting right
hand side sum.
For every frequency index $f$,
an elementary computation shows that
$N_m(f)$ is equivalent to a random vector in $\mathbb{R}^2$ where each component
is a Gaussian random variable with mean 0 and variance
$\sigma^2/2$. This comes from the fact that
$N_m(f)$ is obtained from a sum of independent complex Gaussian random variables.
In particular, this viewpoint means that $|N_{m+1}|$ is a Rayleigh distribution
with expected value $\mathbb{E}|N_{m+1}| = \frac{\sigma\sqrt{\pi}}{2}$.
Since $X$ 
itself is not a random variable the expected value of the first term in
the summation is
\begin{equation}
\mathbb{E}|XN^*_{m+1}| = |X|\cdot\mathbb{E}|N^*_{m+1}| = 
\frac{\sigma\sqrt{\pi}}{2} |X|. \label{bias signal noise 1}
\end{equation}
Similarly, the second term of the summation gives
\begin{equation}
\mathbb{E}|X^*N_m| = |X|\cdot\mathbb{E}|N_m| = 
\frac{\sigma\sqrt{\pi}}{2} |X|. \label{bias signal noise 2}
\end{equation}
Since those terms are not dependent on $m$ we see that the expected value
of $|P_y| - |X|^2$ is bounded by
\begin{equation}
\mathbb{E}(|P_y| - |X|^2) \leq
\mathbb{E}|P_n| + \frac{1}{M}\sum_{m=1}^M \mathbb{E}(|XN_{m+1}^*|
    + |X^*N_m|) \leq \sigma \sqrt{\pi}|X|+\frac{\sigma^2}{\sqrt{2M}},
\end{equation}
which establishes the result.
\end{proof}

Note that the statements of \cref{bias thm} and \cref{with data exp} rely on
the first moment and the bound on the absolute value of first moment that was
computed in \cref{main thm} for Gaussian noise. However, the actual proof
technique holds for any type of noise as the values associated with the noise
profile are only used at the end of the computation. That leads to the following
corollary of \cref{bias thm} and \cref{with data exp}

\begin{corr}\label{non-gauss corr}
Suppose $y(t) = x(t) + n(t)$ is divided into $M$ windows of length $L$ where $x(t)$ is
$L$-periodic and $n(t)$ is a length $LM$ sequence of i.i.d.\@ random variables with
zero mean. Then, for any choice of $m$,
\[
\mathbb{E}(P_y) - |X|^2 = 0
\]
and
\[
\mathbb{E}|P_y(f)| - |X(f)|^2 \leq 2|X(f)|\,\mathbb{E}|N_m(f)| + \mathbb{E}|P_n(f)|.
\]
\end{corr}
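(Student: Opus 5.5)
The plan is to reuse the proofs of \cref{bias thm} and \cref{with data exp} essentially verbatim. Gaussianity is never needed until the last step of each, where the numerical values $\mathbb{E}(P_n)=0$, $\mathbb{E}|N_m|=\sigma\sqrt{\pi}/2$ and $\mathbb{E}|P_n|\le\sigma^2/\sqrt{2M}$ are substituted in. Throughout we use $Y_m=X+N_m$ and the decomposition
\[
Y_mY^*_{m+1}=|X|^2+XN^*_{m+1}+X^*N_m+N_mN^*_{m+1},
\]
so that $P_y=|X|^2+\frac{1}{M}\sum_m\mathrm{Re}(XN^*_{m+1}+X^*N_m)+P_n$.

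For the unbiasedness claim we take expectations term by term, exactly as in \eqref{bias computation}. Each $N_m$ is a finite deterministic linear combination of zero-mean variables, so $\mathbb{E}(N_m)=0$, and the cross terms vanish because $X$ is deterministic. It remains to show $\mathbb{E}(P_n)=0$ without Isserlis' theorem. For this we use only the first half of the proof of \cref{Cm covariance}, which writes
\[
C_m=\frac{1}{L}\sum_{t,t'}\cos(\theta_t-\theta_{t'})\,n_m(t)n_{m+1}(t').
\]
Each $n_m(t)$ and $n_{m+1}(t')$ lives in a distinct window. When $M\ge 2$, windows $m$ and $m+1$ (mod $M$) are disjoint, so independence gives $\mathbb{E}(n_m(t)n_{m+1}(t'))=\mathbb{E}(n_m(t))\mathbb{E}(n_{m+1}(t'))=0$. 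Hence $\mathbb{E}(C_m)=0$ and $\mathbb{E}(P_n)=0$.

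The one subtlety is integrability: we implicitly need finite first moments, since the statement says zero mean. Then each product of two independent integrable variables is integrable, so the expectations are well defined. We also implicitly need $M\ge 2$ so that adjacent windows are distinct; this is where I would add a remark, since the corollary drops the hypothesis $M\ge3$ (for $M=1$ we would have $Y_1=Y_0$ and the claim fails).

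For the bound on $\mathbb{E}|P_y|-|X|^2$ we repeat the triangle-inequality chain \eqref{triangle Py}--\eqref{bias ineq computation}, which is purely deterministic:
\[
|P_y|-|X|^2\le |P_n|+\frac{1}{M}\sum_m\bigl(|X||N_{m+1}|+|X||N_m|\bigr).
\]
Taking expectations, we observe that the windows $n_m$ are identically distributed because $n$ is i.i.d., so $\mathbb{E}|N_m|$ does not depend on $m$. The average therefore collapses to $2|X|\,\mathbb{E}|N_m|$ for any fixed $m$, which gives the stated inequality. No step is a real obstacle; the only care needed is to justify independence of $m$ in $\mathbb{E}|N_m|$, and to note that the quantity is finite (or else the inequality holds trivially).
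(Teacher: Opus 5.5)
Your proposal is correct and follows the paper's proof essentially step for step. You take expectations term by term, with $\mathbb{E}(C_m)=0$ coming only from zero means and cross-window independence. You then rerun the deterministic triangle-inequality chain and use that the windows are identically distributed, so $\mathbb{E}|N_m|$ does not depend on $m$. Your two extra remarks are also correct: zero mean gives the needed integrability, and unbiasedness requires $M\ge 2$, since for $M=1$ one has $P_y=|Y_0|^2$. That second remark makes explicit a hypothesis the corollary's statement silently drops.
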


\begin{proof}
Fix a frequency $f \neq 0, L/2$. Since $n(t)$ is independent for all $t$, 
the Fourier transform of the $m$-th window is independent
of any other window. In particular, that means the Fourier transform
of the $m$-th window at frequency $f$, $N_m(f)$, is also i.i.d.\@ as a sequence
of $M$ random variables indexed by $m$ for this fixed $f$. 
Thus, $\mathbb{E}|N_m(f)|$ is the
same for any arbitrary choice of $m$.

Returning
to \eqref{bias computation}, all of the same computations still hold
and moreover, the proof of the first claim in \cref{Cm covariance}
did not use any specific facts about the Gaussian distribution. Specifically,
the property that allowed us to conclude 
$\mathbb{E}(\mathrm{Re}(N_mN^*_{m+1})) = 0$ was only the fact that
the expected value of the noise at any time $t$ was zero, regardless of
window. For the cross terms, since $N_m$ and $N^*_{m+1}$ are complex
linear combinations of random variables with zero mean, the expected
value of the real part of each is zero as well. This means the entire
right hand side of \eqref{bias computation} is zero which
establishes the first result.

Similarly, returning to \eqref{bias ineq computation}, we see that all
of the same computations hold except in the last step, where we do not 
know the values of $\mathbb{E}|N_m(f)|$ and $\mathbb{E}|P_n|$. However, the first
equalities in both
\eqref{bias signal noise 1} and \eqref{bias signal noise 2} are still valid
and so replacing them in the quantity obtained in \eqref{bias ineq computation}
gives the second result.
\end{proof}

The assumption that the noise has zero mean is both practical and useful.
If the noise has zero mean, this amounts to a constant value being added
to the DC component of the signal, and can thus be absorbed into the 
original signal $x(t)$. This affects the $f=0$ of the spectrum, which we
have explicitly ignored in this analysis.
In general, extending past the Gaussian case is non-trivial. Computing
the covariances of $\mathrm{Re}(N_m N^*_{m+1})$ between different values
of $m$ required the use of Isserlis' Theorem in \cref{with isserlis}.
The generalization of this, the moment-cumulants formula 
\cite{Leonov1959}, would require the fourth cumulant of the distribution
associated with the noise. This is not always easy to compute and may
not exist for some distributions. For example, the
Cauchy distribution has no finite moments and thus has no fourth cumulant.

\section{Simulations and Results}\label{sim section}
In this section we describe several numerical simulations and experiments
to illustrate and verify the theoretical bounds obtained in \cref{exp value section}.
For transparency and replicability, all code can be found on
Zenodo at \href{https://doi.org/10.5281/zenodo.17526546}{https://doi.org/10.5281/zenodo.17526546}.

\subsection{Direct Comparison to Bartlett and Welch Methods}\label{comparison}
\begin{figure*}[h]
\centering
\includegraphics[width=3.6in]{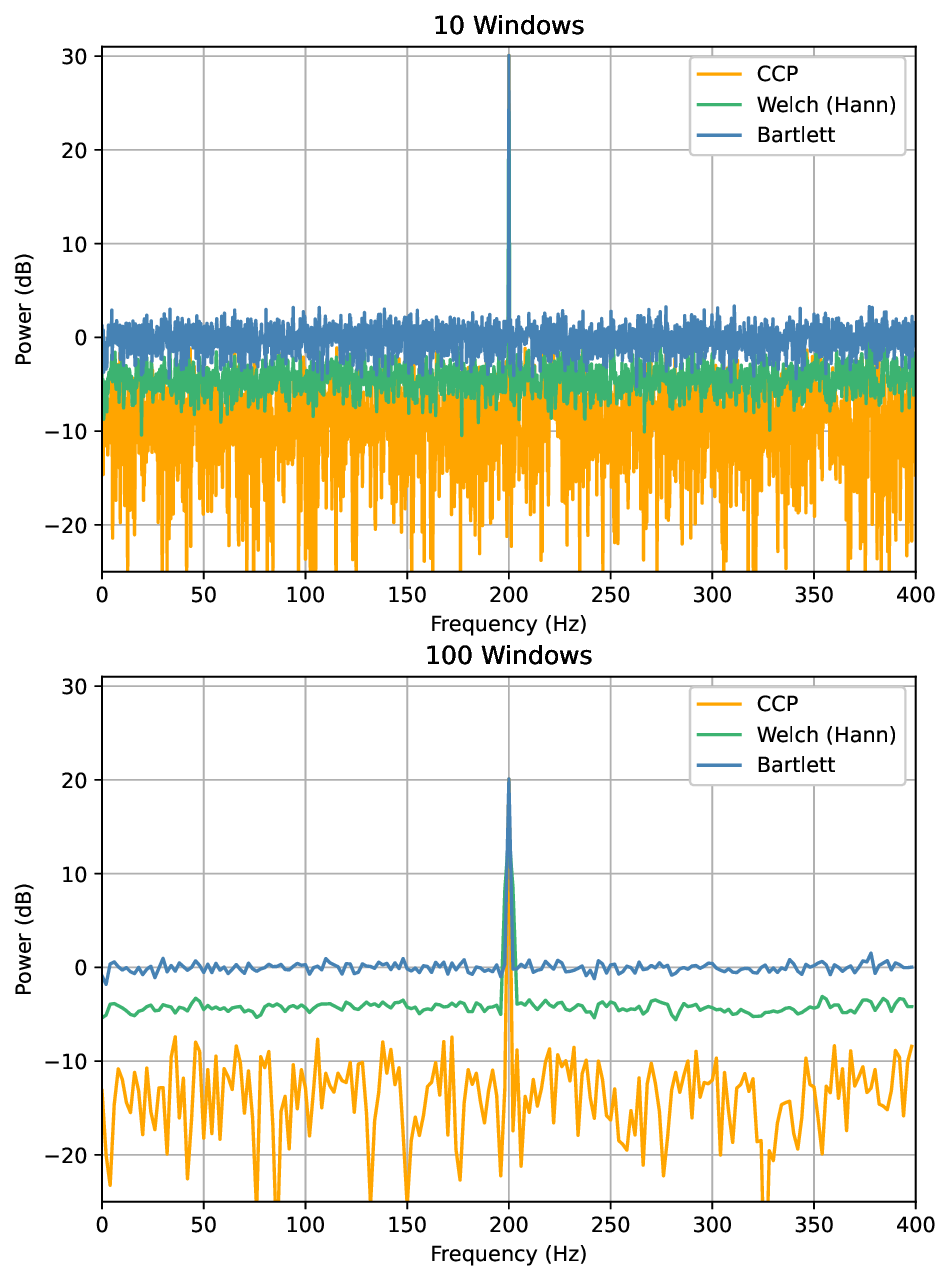}
\caption{PSD estimation using the proposed CCP method, Bartlett's method, and Welch's method (Hann window, no overlap) on a 200 Hz signal with additive Gaussian
white noise ($\sigma^2 = 1)$. Top subplot uses 10 windows and bottom uses 100 windows.
Going from 10 to 100 windows does not decrease
the 0 dB noise floor in Bartlett's method or the -4 dB noise floor in 
Welch's method. In contrast, the CCP method
achieves a noise floor decreasing from -8 dB (10 windows) to -13 dB (100 windows). 
Notably,
both are lower than the noise floor from Bartlett's and Welch's method.
}
\label{graph comp}
\end{figure*}

For this experiment we added white Gaussian noise
to a 200 Hz cosine wave and sampled at a rate of 800 Hz. The noise is
set to have variance 1. We then divide the signal into a set of 
10 non-overlapping windows and a set of 100 non-overlapping windows.
The same signal is used for both the 10 and 100 window methods to ensure
as direct of a comparison as possible. The proposed cross-correlation periodogram method,
Bartlett's method, and Welch's method with a Hann window and no overlap are applied to 
both sets. 
To avoid negativity, the absolute value of the cross-correlation periodogram 
is computed.
We then compare the power spectral density estimates all three 
methods produce. To ensure proper comparison and replicability of the results,
Bartlett and Welch methods are manually implemented. 
The results are depicted in \cref{graph comp}.

For all three methods we see the variance of the PSD estimate 
decrease when the number of windows
is increased from 10 to 100. However for Bartlett's method, we see the
well-known phenomenon where the noise floor does not decrease when going
from $M=10$ to $M=100$ windows and sits at $0$ dB for both estimates. Similarly, for Welch's method we see that same phenomenon where
the noise floor does not decrease when going from $M=10$ to $M=100$ windows and
sits at $-4$ dB in both estimates. 

In contrast, we see a decrease
in noise floor from $-8$ dB to
$-13$ dB in the cross-correlation periodogram method as we go from $M=10$ to $M=100$
windows. 
Note that these are lower than
the bounds predicted by \cref{main thm} which would imply the
noise floor should be at most $-6.5$ dB and $-11.5$ dB for
$M=10$ and $M=100$ windows, respectively. This is unsurprising as
the bound is really obtained by applying Lyapunov's inequality to the second moment
but equality is only achieved when the random variable
is constant almost surely.
\FloatBarrier

\subsection{Validation of Bounds}\label{bound validation}
\begin{figure*}
\centering
\begin{tabular}{|c|c|c|c|c|}
\hline
\# Windows & Mean $\pm$ SE & Mean Bound & Sample Mean Sq & Pred Mean Sq \\
\hline
3 & $0.2955 \pm 0.0027$ & 0.4082 & 0.1616 & 0.1667\\
5 & $0.2358 \pm 0.0022$ & 0.3162 & 0.1028 & 0.1000\\
10 & $0.1723 \pm 0.0014$ & 0.2236 & 0.0506 & 0.0500\\
25 & $0.1110 \pm 0.0009$ & 0.1414 & 0.0201 & 0.0200\\
100 & $0.0558 \pm 0.0004$ & 0.0707 & 0.0049 & 0.0050\\
1000 & $0.0179 \pm 0.0001$ & 0.0224 & 0.0005 & 0.0005\\
\hline
\end{tabular}
\caption{Table of empirical expected values compared with the predicted bound
from Theorem 1. 100 samples are taken per interval from $N(0,1)$. Empirical expected
value is the mean of the 10th frequency component over 10,000 trials.}
\label{bound comparison table}
\end{figure*}

In \cref{main thm} we obtain bounds on the expected value and variance
of the cross-correlation periodogram estimator when used on 
white Gaussian noise.
These bounds did not depend on the number of samples drawn per window.
For this experiment, we generate samples of white Gaussian noise, draw 100 samples per window, and average the 10th frequency bin
over $M$ windows. Since in expected value the spectrum of white noise
is uniform throughout, the choice of 10th frequency is arbitrary.
We perform this 10,000 times for each value of $M = 3,5,10,25,100,$ and
$1000$. This effectively creates 10,000 samples drawn from the distribution
of CCP estimates on white Gaussian noise at each parameter level $M$.

In \cref{bound comparison table} we note the empirical mean with
standard error for each $M$ and compare it to the bound predicted
by \cref{main thm}. We also compute the sample second moment and compare
it to the predicted second moment also predicted in \cref{main thm}.
We see the mean decrease as predicted and the empirical means are consistent
with the predicted bound. In fact, there appears to be a significant
gap between the predicted and empirical bounds suggesting that the
bounds computed are not sharp. Nonetheless, the empirical evidence
sufficiently establishes the main advantage of the 
cross-correlation periodogram
method over Bartlett's method. That is, the expected contribution
of noise does indeed decay as the number of windows increases while
retaining the property of Bartlett's estimator of reducing the
variance of the estimator as the number of windows increases.

\subsection{Phase Effects with Mild Misalignment}\label{benign phase effect}
\begin{figure}[h]
\centering
\includegraphics[width=4.5in]{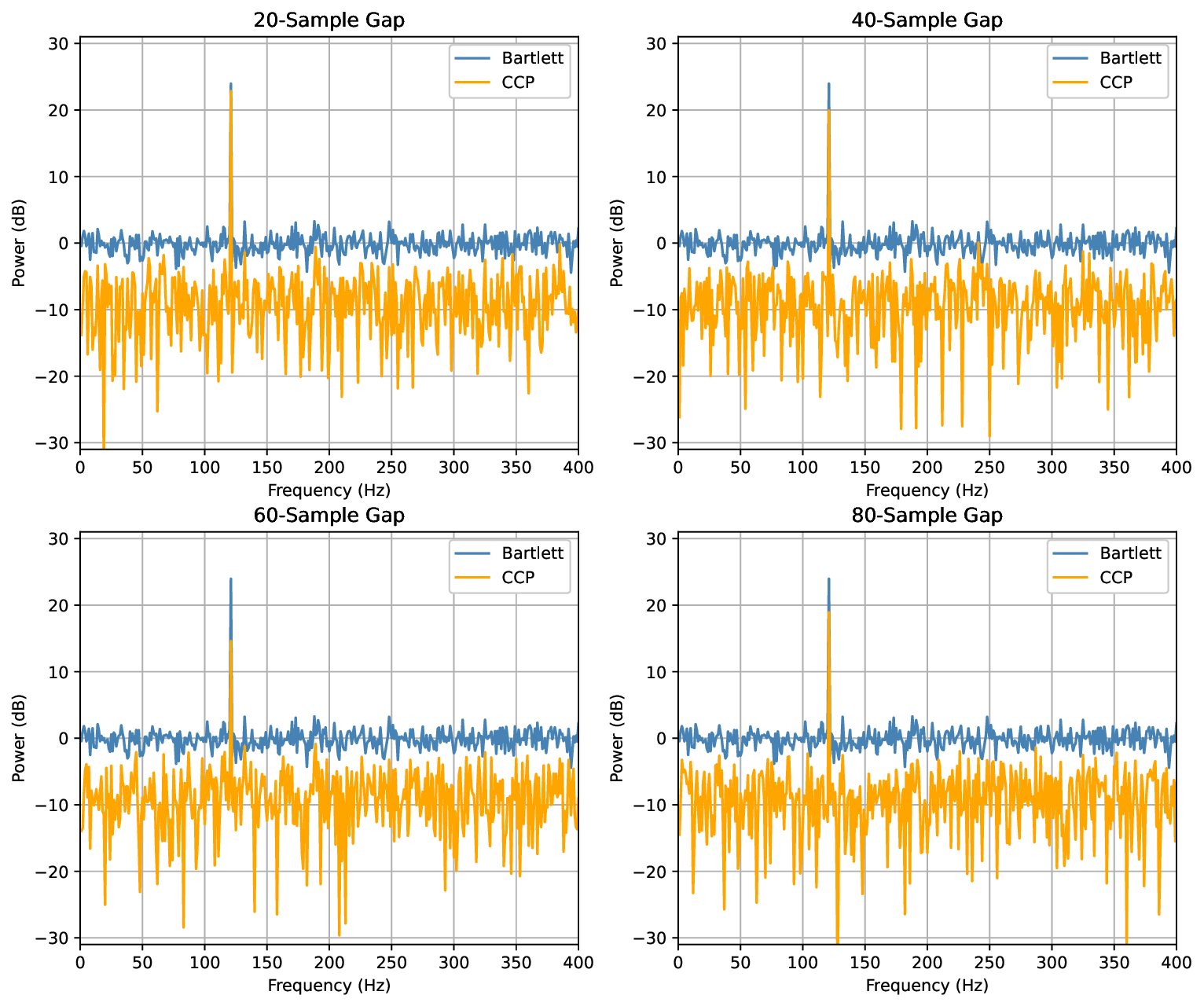}
\caption{121 Hz sinusoidal with additive white Gaussian 
noise divided into 10 windows of length 1 s. Gaps of 20, 40, 60, and
80 samples are introduced between windows. Because of the gaps, a slight
phase mismatch reduces the 121 Hz signal slightly but enough of the signal
is preserved to detect the frequency in all four cases.}
\label{mild phase effects}
\end{figure}

In this experiment we show that even if the windows are not
aligned to start and end at the exact same phases the CCP
estimator is still able to reduce noise and preserve the intended
signal reasonably well. We generate a pure sinusoidal signal with frequency
$f_0 = 121$ Hz sampled at 1000 Hz. For this experiment, each window has length
one so $f_0 = 121$ is the correct index for the 121 Hz frequency component. 
We then add white Gaussian
noise to the signal. The signal is split into 1 s windows but with a gap
between each window. This experiment is repeated four times with a gap of 20, 40,
60 and 80 samples. Recall from \cref{needed periodicity}
that the resulting phase factor is $\exp(2 \pi i f_0 \tau/L)$ for each
gap $\tau$.
Since $L=1000$ this results in phase shifts of 0.42, 0.84, 0.26, and 0.68 periods
of the 121 Hz signal, respectively. 

In all four cases, the signals are preserved enough to be detected. In the 20 sample
gap case, the signal is nearly perfectly preserved. The worst performing of these 
is the 60 sample gap, but even in that scenario the signal is at least preserved
well enough to be detectable. The 60 sample gap performs the worst because 
0.26 periods is the closest to the catastrophic scenario detailed
in \cref{phase effects} where the phase shift is 0.25 periods. This shows that even
when close to the critical numbers of 0.25 or 0.75 periods (phase factors of $i$ and
$-i$), the estimator can still
preserve signals well enough to be detectable. On the other hand, the 20 sample
gap performs best because it is the closest to 0 or 0.5 periods where the phase
factors would be 1 and -1 where the true signal would be fully kept in the real part.
\subsection{Misalignment Annihilation and Recovery via Realignment}
\label{phase effect sim}

\begin{figure}[h]
\centering
\includegraphics[width=3.5in]{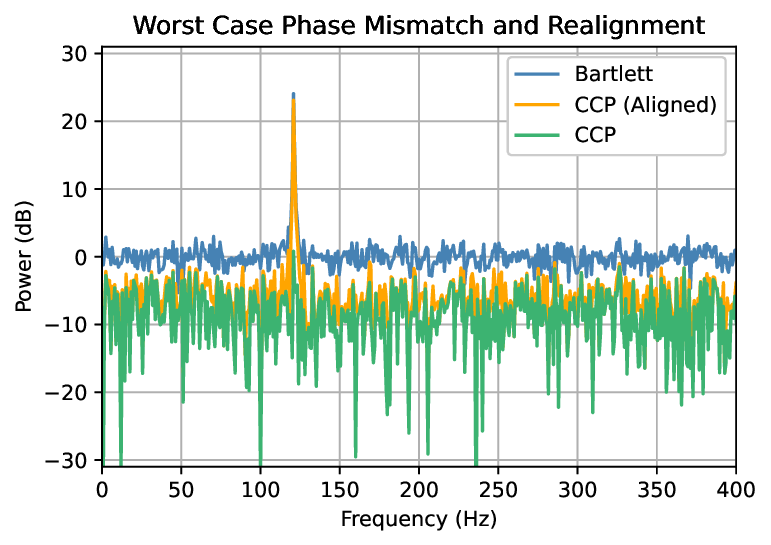}
\caption{121 Hz sinusoidal with additive white Gaussian 
noise divided into 10 windows of length 1.25 s. Because of
the mismatched window length and the target frequency the
cross-correlation periodogram (CCP) method completely eliminates the 121 Hz signal
in its PSD estimation while Bartlett's method does not
encounter this issue. Despite this, multiplying cross-correlations by $i$
before computing the real part restores the signal.}
\label{phase effects}
\end{figure}

In this experiment we show the phase effects that window shifts
can have on this method. To show this,
we generate a pure sinusoidal signal with frequency $f_0 = 121$ Hz and
sampled at 1000 Hz. This signal also has
additive white Gaussian noise.
The signal is then divided into non-overlapping windows of length 1.25 s to
simulate a mismatch that may arise from not knowing what the expected frequencies
of the target in advance.
This is effectively a gap of 250 samples between windows, i.e., a shift of $\tau = 250$ which
is exactly 1/4th of the sample rate. Since $121 \equiv 1 \mod 4$, 
this means that the
Fourier transform at $f_0$ of each window $Y_{k}(f_0)$ differs from the previous
window $Y_{k-1}(f_0)$ by
\begin{equation}
    Y_k(f_0) = e^{\pi i /2} Y_{k-1}(f_0) = iY_{k-1}(f_0).
\end{equation}
In particular, when looking at the estimator given by the cross-correlation
periodogram method, this means that
\begin{equation}
P_{y}(f_0) = \mathrm{Re}(i|X(f_0)|^2 + \text{ noise terms }),
\end{equation}
which completely kills off the intended target. 
However, by undoing the shift by multiplying by $-\exp(\pi/2) = -i$ before
computing the real part, the signal is restored.

These observations can be seen in the results depicted in
\cref{phase effects}.
Note that the same reduction in noise
floor and variance promised by \cref{main thm} is still obtained even
when the primary signal is eliminated completely. This is consistent
with \cref{main thm} as the theorem does not involve the periodicity condition
and makes no guarantees on the signal if
it is not properly periodically sampled or adjusted.

\subsection{Non-Gaussian Noise}\label{non-gauss experiment}
\FloatBarrier

\begin{figure}
\centering
\begin{tabular}{|c||c|c|c|c|c|}
\hline
~ & \multicolumn{5}{c|}{Number of Windows}\\
\hline
Noise Type & 3 & 5 & 10 & 25 & 100 \\
\hline
Laplace & 0.3051 & 0.2331 & 0.1764 & 0.1105 & 0.0560 \\
Uniform & 0.2954 & 0.2347 & 0.1671 & 0.1138 & 0.0566 \\
AR(1) & 0.3988 & 0.3253 & 0.2348 & 0.1463 & 0.0721 \\
\hline
\end{tabular}
\caption{Table of empirical expected noise contribution
when the cross-correlation periodogram is used on
non-Gaussian noise. All three exhibit the same noise 
contribution decay
proportional to the square root of the number of windows.}
\label{non-gaussian table}
\end{figure}

The main purpose of using Gaussian noise was to appeal to
Isserlis' theorem, as it is the simplest case of the
moment-cumulant expansion of the product of four
random variables. However, since $N_m$ itself is a linear combination
of $L$ noise variables, the Central Limit Theorem will guarantee
that the Fourier transform of the noise windows will be
nearly Gaussian in many cases if each window has enough samples. We show this by testing
the estimator on Laplace, uniform, and order-one
autoregressive noise. 

We generate samples of each type of noise with 100 samples
per window and average the 10th frequency bin over
$M$ windows. For Laplace and uniform noise, the variables
are independent and identically distributed and normalized
to have mean zero and standard deviation 1. For the
autoregressive noise, we use an autoregressive coefficient
of $0.5$ and use innovation variance $0.75 = 1-0.5^2$ so that the variance
of the total autoregressive noise is $1$
for all times $t$.

We do this process 1,000 times for each value of 
$M = 3, 5, 10, 25$ and $100$. As with \cref{bound validation},
this effectively creates 1,000 samples drawn from 
the distribution
of CCP estimates on each type of noise. For the Laplace and uniform noise,
the i.i.d.\@ assumption means the spectrum is flat and there 
is no difference between any frequency bins. For the
autoregressive noise, different bins will have different
scales because of the non-flat spectrum, but the same
decay behavior suggested by \cref{main thm} is still seen despite the
arbitrary choice of frequency.
Even though autoregressive noise is not independent across time samples within a window,
it is still independent across windows. The driving force behind the CCP estimator's
noise reduction is independence across windows, which explains the decaying
contribution of noise even for colored noise.
In particular, all three exhibit the same decay in noise
contribution proportional to $1/\sqrt{M}$ as suggested
by \cref{main thm}.

\section{Discussion and Future Work}\label{discussion}
The main consequence of \cref{main thm} is that the Cross-Correlation
Periodogram (CCP) method is an estimator that has zero contribution from noise in
expected value. Furthermore, even if absolute values are ultimately computed to
avoid negativity, the estimator still results in
reduced contribution from noise that also 
decays as the number of windows increases. 
This is in contrast to Bartlett's method where more windows
only decreases variance but not the expected value.
\cref{bias thm} shows that this allows for an unbiased estimator or a
biased estimator whose bias decays as the number of windows grows.
\cref{non-gauss corr} takes this a step further and shows that the CCP is an
unbiased estimator with any zero mean i.i.d.\@ additive noise. 
These results are empirically verified by the results of \cref{comparison}.
Moreover, the bounds obtained by \cref{main thm}
are further supported by empirical evidence in \cref{bound validation} and
\cref{non-gauss experiment} suggests empirically that the bounds essentially
still hold for non-Gaussian noise.

The statistical analysis of the cross-correlation periodogram estimator
relied on a window alignment assumption with respect to the signal
samples. This is because as noted in \cref{needed periodicity}, 
phase-shifted windows may cause effects that can even
potentially nullify the contribution of a desired signal entirely.
Techniques for handling such a phase shift are demonstrated, but a more detailed analysis
of those corrective techniques and how they can be used
to allow for overlapping or tapered windows is left to future work. 
This would effectively extend the cross-correlation periodogram method
in a similar fashion to how Welch's method \cite{Welch1967} extends Bartlett's method.

From the statistical lens, the second moment and subsequent bound in \cref{main thm}
is obtained in part through Isserlis' theorem in \cref{with isserlis}. As noted
in \cref{periodic analysis}, proper generalization would require moment-cumulant formulas
\cite{Leonov1959}, but may be tractable for other kinds of non-Gaussian noise.
Furthermore, the bound from \cref{main thm} is obtained by using Lyapunov's inequality
on the second moment. This allows for a simple bound for the first moment leveraging
the fact that we can compute the second moment exactly. However, as noted in 
\cref{comparison}, this is a loose bound since Lyapunov's inequality achieves equality
if and only if the random variable in question is constant almost surely. 
It is possible tighter bounds could be obtained by examining higher moments, e.g.,
the fourth moment. 

\section*{Acknowledgments}
The views expressed in the paper are those of the author and do not reflect the 
official policy or position of the U.S. Naval Academy, 
Department of the Navy, or 
the U.S. Government. 
The author would like to thank Trey Morris, Celeste Brown, and Garret Vo
of the Naval Research Lab for informing him about the prior works of
Douglas Nelson. The author would also like to thank fellow Mathematics Department
colleagues Junhyung Park and Douglas VanDerwerken for their helpful suggestions
and expertise in
statistics and probability theory.

\bibliography{citations}
\bibliographystyle{elsarticle-num}
\end{document}